\newtheorem{theorem}{Theorem}[section]
\newtheorem{lemma}[theorem]{Lemma}
\newtheorem{conjecture}[theorem]{Conjecture}
\theoremstyle{definition}
\numberwithin{equation}{section}
\begin{document}

\title[Proof of Sondow's conjecture]{A proof of Sondow's conjecture on the Smarandache function}

\author{Xiumei Li}
\address{School of Mathematical Sciences, Qufu Normal University, Qufu, 273165, China}
\email{lxiumei2013@qfnu.edu.cn}

\author{Min Sha}
\address{School of Mathematics and Statistics, University of New South Wales, Sydney, NSW 2052, Australia}
\email{shamin2010@gmail.com}

\keywords{Sondow's conjecture, factorial, Smarandache function}

\subjclass[2010]{11A25, 11N25}




\maketitle

\begin{abstract}
The Smarandache function of a positive integer $n$, denoted by $S(n)$, is defined to be
the smallest positive integer $j$ such that $n$ divides the factorial $j!$.
In this note, we prove that for any fixed number $k > 1$, the inequality $n^k < S(n)!$ holds for almost all positive integers $n$.
This confirms Sondow's conjecture which asserts that the inequality $n^2 < S(n)!$ holds for almost all positive integers $n$.
\end{abstract}

\section{Introduction.}

In 2006 Sondow \cite{Sondow}
gave a new measure of irrationality
for $e$ (the base of the natural logarithm), that is, for all integers $m$ and $n$ with $n>1$
\begin{equation}\label{eq:irr1}
 \left| e-\frac{m}{n} \right| > \frac{1}{(S(n)+1)!},
\end{equation}
where $S(n)$ is  the smallest positive integer $j$ such that $n$ divides the factorial $j!$.
On the other hand, there is a well-known irrationality measure for $e$ (see, for instance, \cite[Theorem 1]{Davis}):
given any $\epsilon>0$ there exists a positive constant $n(\epsilon)$ such that
\begin{equation}\label{eq:irr2}
 \left| e-\frac{m}{n} \right| > \frac{1}{n^{2+\epsilon}}
\end{equation}
for all integers $m$ and $n$ with $n>n(\epsilon)$.
By contrast, Dirichlet's approximation theorem implies that the inequality
$$
 \left| e-\frac{m}{n} \right| < \frac{1}{n^2}
$$
 is satisfied for infinitely many
integers $m$ and $n$ with $n>1$,
and so it implies that the lower bound in \eqref{eq:irr2} is somehow optimal.
Sondow asserted that \eqref{eq:irr2} is usually stronger than \eqref{eq:irr1} by posing the following conjecture.

\begin{conjecture}[{\cite[Conjecture 1]{Sondow}}]  \label{conjecture}
The inequality $n^2 < S(n)!$ holds for almost all positive integers $n$.
\end{conjecture}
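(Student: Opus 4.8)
The plan is to bound the number of ``bad'' integers $n\le x$ --- those for which $S(n)!\le n^{k}$ --- and show it is $o(x)$ as $x\to\infty$. The key observation is that a bad $n$ cannot have $S(n)$ large: if $n\le x$ and $S(n)!\le n^{k}$, then $S(n)!\le x^{k}$, so $S(n)\le M(x)$, where $M(x)$ denotes the largest positive integer $m$ with $m!\le x^{k}$. By the very definition of $S$, the condition $S(n)\le M(x)$ is equivalent to $n\mid M(x)!$. Hence every bad $n\le x$ is a divisor of $M(x)!$, and so the number of bad $n$ up to $x$ is at most $d(M(x)!)$, the number of positive divisors of $M(x)!$. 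This reduction seems cleaner than estimating the relevant count of smooth numbers directly.

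Next I would pin down the size of $M(x)$. Using the elementary inequality $m!\ge (m/e)^{m}$, the relation $m!\le x^{k}$ forces $m(\log m-1)\le k\log x$, from which one extracts $M(x)\le C_{k}\,\log x/\log\log x$ for all large $x$, with $C_{k}$ depending only on $k$; in particular $M(x)\to\infty$ and $M(x)=o(\log x)$. Then I would bound the divisor count: writing $M=M(x)$ and letting $v_{p}$ denote the $p$-adic valuation, the trivial estimate $v_{p}(M!)=\sum_{i\ge 1}\lfloor M/p^{i}\rfloor\le M$ gives
$$
 d(M!)=\prod_{p\le M}\bigl(v_{p}(M!)+1\bigr)\le\prod_{p\le M}(M+1)=(M+1)^{\pi(M)}.
$$
Applying Chebyshev's bound $\pi(M)=O(M/\log M)$ yields $\log d(M!)=O(M)=O(\log x/\log\log x)=o(\log x)$, hence $d(M!)=x^{o(1)}$.

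Combining the two parts, the number of bad $n\le x$ is at most $d(M(x)!)=x^{o(1)}=o(x)$, which is precisely the assertion that $n^{k}<S(n)!$ holds for almost all positive integers $n$. Since the argument is uniform in $k$, it proves the stated strengthening of Conjecture~\ref{conjecture} in one stroke, the case $k=2$ recovering Sondow's conjecture.

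The only delicate point I anticipate is securing the sharp order $M(x)\asymp\log x/\log\log x$ rather than the crude $M(x)=O(\log x)$: the latter would give only $d(M(x)!)=x^{O(1)}$, which is worthless, so the logarithmic gain coming from Stirling's formula is what makes the whole estimate work. The remaining ingredients --- the reduction to divisors of $M(x)!$, the trivial valuation bound, and Chebyshev's prime-counting estimate --- are routine.
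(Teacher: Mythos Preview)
Your argument is correct and follows a genuinely different, more elementary path than the paper. The paper splits the bad $n\le x$ according to whether $S(n)=P(n)$: the case $S(n)\neq P(n)$ is bounded using Ivi\'c's asymptotic for $N(x)$, while the case $S(n)=P(n)$ is reduced via Stirling to a smooth-number count $\Psi(x,k\log x)$ and handled by a standard de~Bruijn-type bound, yielding $N_k(x)\le x\exp\bigl(-(1+o(1))\sqrt{2\log x\log\log x}\bigr)$. Your reduction bypasses both of these ingredients: the equivalence $S(n)\le M\iff n\mid M!$ converts the problem into a pure divisor count, and combining the Stirling estimate $M(x)=O(\log x/\log\log x)$ with Chebyshev's bound $\pi(M)=O(M/\log M)$ gives
\[
N_k(x)\le d\bigl(M(x)!\bigr)\le (M+1)^{\pi(M)}=\exp\bigl(O(\log x/\log\log x)\bigr)=x^{O(1/\log\log x)},
\]
which is in fact dramatically smaller than the paper's upper bound. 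What the paper's route buys is range: for the stronger threshold $\exp(n^{1/\log\log n})$ treated in its second theorem, the relevant $M(x)$ is of order $x^{1/\log\log x}$, the divisor count $d(M(x)!)$ vastly exceeds $x$, and one genuinely needs the smooth-number machinery; your method, as it stands, does not reach that regime. For the fixed-$k$ conjecture itself, however, your approach is both simpler and quantitatively stronger.
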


As indicated in \cite{Sondow}, in Conjecture~\ref{conjecture}, $S(n)$ can be replaced by $P(n)$ due to a result of Ivi{\' c}
\cite[Theorem 1]{Ivic}, where $P(n)$ is the largest prime factor of $n$ for $n \ge 2$ (put $P(1)=1$).
By definition, $P(n) \le S(n)$ for any positive integer $n$.

In number theory, $S(n)$ is called the Smarandache function.
This function was studied by Lucas \cite{Lucas} for powers of primes
and then by Neuberg \cite{Neu} and Kempner \cite{Kem} for general $n$.
In particular, Kempner \cite{Kem} gave the first correct algorithm for computing this function.
In 1980 Smarandache \cite{Sma} rediscovered this function.
It is also sometimes called the Kempner function.
This function arises here and there in number theory, as demonstrated in \cite{Sondow}.
Please see \cite{Liu} for a survey on recent results and \cite{HS} for a generalization to several variables.
In addition, the polynomial analogue of the Smarandache function has been applied in \cite{LS2,LS3} and studied in detail in \cite{LS1}.

In this note, we prove a stronger form of Conjecture~\ref{conjecture}.

For any real $k> 1$ and $x > 1$, denote by $N_k(x)$ the number of
positive integers $n$ such that $n \leq x$ and $S(n)! \leq n^k$.

\begin{theorem}\label{thm}
For any fixed number $k > 1$ and any sufficiently large $x$, we have
$$
N_k(x) \le x \exp \left(-\sqrt{2\log x \log\log x} \big(1+ O( \log\log\log x /\log\log x) \big) \right).
$$
\end{theorem}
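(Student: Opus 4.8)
The plan is to reduce the estimate for $N_k(x)$ to a bound for the number of smooth integers up to $x$, and then to apply a sharp form of the Dickman--de Bruijn estimate. The only structural fact needed is the elementary inequality $P(n)\le S(n)$, valid for every $n\ge2$: since $P(n)\mid n\mid S(n)!$ and $P(n)$ is prime, one has $S(n)\ge P(n)$. (This is also the inequality behind the remark that $S$ may be replaced by $P$ via Ivi\'c's theorem.) Consequently, if $n\le x$ is counted by $N_k(x)$, then
$$P(n)!\le S(n)!\le n^{k}\le x^{k},$$
and Stirling's formula turns $P(n)!\le x^{k}$ into a ceiling $P(n)\le y_1$ with $\log y_1=\log\log x+O(\log\log\log x)$. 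Thus every such $n$ is $y_1$-smooth, hence $y$-smooth for every $y\ge y_1$, and therefore
$$N_k(x)\le \Psi(x,y),\qquad \Psi(x,y):=\#\{m\le x: P(m)\le y\},$$
for any admissible $y\ge y_1$.

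Next I would choose $y$ and evaluate. Writing $u=\log x/\log y$, I take $\log y=\tfrac{1}{2\sqrt2}\sqrt{\log x\log\log x}$, which lies well above $y_1$ and inside the range where the de Bruijn estimate
$$\Psi(x,y)=x\exp\!\big(-u\log u-u\log\log u+O(u)\big)$$
holds uniformly. For this $y$ one computes $u=2\sqrt2\,\sqrt{\log x/\log\log x}$ and $\log u=\tfrac12\log\log x-\tfrac12\log\log\log x+O(1)$, so that
$$u\log u=\sqrt{2\log x\log\log x}\,\Big(1+O\big(\tfrac{\log\log\log x}{\log\log x}\big)\Big),$$
while the secondary terms $u\log\log u$ and $O(u)$ are themselves $O\big(u\log u\cdot\tfrac{\log\log\log x}{\log\log x}\big)$ and are absorbed into the same error. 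Substituting gives exactly
$$N_k(x)\le \Psi(x,y)\le x\exp\!\Big(-\sqrt{2\log x\log\log x}\,\big(1+O(\log\log\log x/\log\log x)\big)\Big),$$
which is the assertion of the theorem.

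The hard part will be the smooth-number input: I must invoke the de Bruijn estimate with enough uniformity in $y$ to cover the chosen scale, and, crucially, with its secondary term made explicit, because it is precisely this secondary term --- rather than a crude $1+o(1)$ --- that produces the factor $1+O(\log\log\log x/\log\log x)$ recorded in the statement. I would either quote the Canfield--Erd\H{o}s--Pomerance theorem in a form that tracks the $u\log\log u$ correction, or reprove the needed upper bound by Rankin's method $\Psi(x,y)\le x^{\sigma}\prod_{p\le y}(1-p^{-\sigma})^{-1}$, optimizing $\sigma$, which yields an upper bound of the same shape without appealing to the full asymptotic. A final, routine check is that enlarging the smoothness parameter from $y_1$ to the chosen $y$ is harmless, since $\Psi(x,\cdot)$ is monotone: any $y\ge y_1$ gives a valid upper bound, and the displayed choice is the one that renders the estimate in the clean closed form demanded by the statement.
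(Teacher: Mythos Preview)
Your argument is correct but proceeds along a genuinely different route from the paper's. The paper splits $N_k(x)=N_{k,1}(x)+N_{k,2}(x)$ according to whether $S(n)\neq P(n)$ or $S(n)=P(n)$. For $N_{k,1}$ it simply quotes Ivi\'c's asymptotic for the count $N(x)$ of $n\le x$ with $S(n)\neq P(n)$, which already carries the factor $\sqrt{2\log x\log\log x}$ with the stated error; this is where the main term in the theorem comes from. For $N_{k,2}$ it bounds $P(n)\le k\log x$ via Stirling and applies only the crude estimate $\Psi(x,y)\ll x\exp\!\big(-\tfrac{\log x}{2\log y}\big)$, obtaining a contribution $\ll x\exp(-c\log x/\log\log x)$ that is negligible by comparison. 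You, by contrast, never split and never use Ivi\'c: from $P(n)\le S(n)$ you reduce directly to a smooth-number count and manufacture the factor $\sqrt{2\log x\log\log x}$ by applying a de~Bruijn/Canfield--Erd\H{o}s--Pomerance estimate at an optimized threshold. Thus the paper purchases a soft $\Psi$-input at the price of citing Ivi\'c's theorem, whereas you trade Ivi\'c away for a sharper $\Psi$-bound with its secondary term visible. Note, incidentally, that your reduction actually gives $N_k(x)\le\Psi(x,y_1)$ with $\log y_1\sim\log\log x$, so that even the crude $\Psi$-bound applied directly at $y_1$ already yields $N_k(x)\ll x\exp(-c\log x/\log\log x)$, stronger than the theorem asserts; your enlargement of $y$ into the de~Bruijn range is legitimate but discards information.
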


We remark that the meaning of ``sufficiently large" in Theorem~\ref{thm} depends only on $k$.

From Theorem~\ref{thm}, for any $k > 1$, we have $N_k(x) / x \to 0$ as $x \to \infty$.
This in fact confirms Conjecture~\ref{conjecture} when $k=2$.

Our approach in fact can achieve more.
Let $M(x)$ be the number of
positive integers $n$ such that $n \leq x$ and $S(n)! \leq \exp(n^{1 / \log\log n})$.
Note that, for any fixed $k >1$ and any sufficiently large $n$, we have
$$
n^k < \exp(n^{1 / \log\log n}).
$$

\begin{theorem}\label{thm2}
$M(x) \ll x/ \sqrt{\log x}$.
\end{theorem}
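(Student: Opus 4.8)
The plan is to deduce this from a bound on the counting function of smooth integers. Suppose $n \le x$ satisfies $S(n)! \le \exp(n^{1/\log\log n})$. Since $P(n) \le S(n)$, we get $P(n)! \le \exp(n^{1/\log\log n})$, and hence $\log(P(n)!) \le n^{1/\log\log n}$. The map $t \mapsto t/\log t$ is increasing for $t > e$, so $n \mapsto \log n/\log\log n$ is increasing for $n > e^{e}$; therefore, for $e^{e} < n \le x$ we have $\log(P(n)!) \le x^{1/\log\log x}$. Using Stirling's inequality in the form $\log(m!) \ge m(\log m - 1)$, we obtain $P(n)(\log P(n) - 1) \le x^{1/\log\log x} =: y$. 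Since $t \mapsto t(\log t - 1)$ is increasing for $t > 1$, comparing the two sides forces $P(n) \le y$ once $x$ is large: indeed, if $P(n) > y$ then the left side would exceed $y(\log y - 1) > y$, because $\log y = \log x/\log\log x > 2$ for $x$ large, contradicting $P(n)(\log P(n)-1)\le y$.

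Thus $M(x) \le \Psi(x, y) + O(1)$, where $\Psi(x, y)$ counts the $y$-smooth integers in $[1, x]$ and the $O(1)$ absorbs the finitely many $n \le e^{e}$. The relevant smoothness parameter is $u := \log x/\log y = \log\log x$, and since $\log y = \log x/\log\log x$ exceeds $(\log\log x)^{2}$ for $x$ large, $y$ lies well inside the range where the classical estimates apply (e.g.\ the de Bruijn--Hildebrand asymptotic $\Psi(x, y) = x\rho(u)(1 + o(1))$ with $\rho$ the Dickman function, or merely a crude uniform upper bound of the shape $\Psi(x, y) \le x\,u^{-u(1 + o(1))}$, either of which suffices here). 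Now $\log\rho(u) = -u\log u\,(1 + o(1))$ and $u = \log\log x \to \infty$, so $u\log u = \log\log x\,\log\log\log x \ge \tfrac12\log\log x$ for $x$ large, whence $\rho(u) \le \exp(-\tfrac12\log\log x) = (\log x)^{-1/2}$. Therefore $\Psi(x, y) \le x(\log x)^{-1/2}(1 + o(1))$, and the theorem follows.

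I expect no serious obstacle: this is essentially the argument behind Theorem~\ref{thm}, run with the threshold $\exp(n^{1/\log\log n})$, which happens to pin the smoothness parameter at $u \approx \log\log x$. The only point needing care is verifying that the invoked estimate for $\Psi(x, y)$ is valid in this range; and since the target $x/\sqrt{\log x}$ is far weaker than the true order of magnitude $x\rho(\log\log x) = x(\log x)^{-(1 + o(1))\log\log\log x}$, no sharp smooth-number estimate is needed — a Rankin-type upper bound already does the job.
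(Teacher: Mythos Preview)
Your argument is correct. The paper proceeds differently: it splits $M(x)=M_1(x)+M_2(x)$ according to whether $S(n)\neq P(n)$ or $S(n)=P(n)$, bounds $M_1(x)$ by Ivi\'c's theorem (Lemma~\ref{lem:Ivic}), and then, only on the set where $S(n)=P(n)$, uses $P(n)!=S(n)!$ to force $P(n)\le x^{1/\log\log x}$ and invokes the elementary smooth-number bound $\Psi(x,y)\ll x\exp(-\log x/(2\log y))$ of Lemma~\ref{lem:main}. You bypass the case split entirely by the trivial inequality $P(n)!\le S(n)!$, which already yields $P(n)\le x^{1/\log\log x}$ for every $n$ counted in $M(x)$; this makes Ivi\'c's result unnecessary and gives a more self-contained proof. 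On the other hand, the paper's appeal to Lemma~\ref{lem:main} is cleaner than your detour through the Dickman asymptotic: with $\log y=\log x/\log\log x$ that lemma gives $\Psi(x,y)\ll x\exp(-\tfrac12\log\log x)=x/\sqrt{\log x}$ in one line, exactly the Rankin-type bound you mention at the end. The paper's structure is chosen for parallelism with the proof of Theorem~\ref{thm}, where the Ivi\'c term is dominant; here your streamlined route is arguably preferable.
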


Theorem~\ref{thm2} implies that the inequality $\exp(n^{1 / \log\log n}) < S(n)!$ holds for almost all $n$.

Here we use the big O notation, $O$ and the Vinogradov symbol $\ll$.
We recall that the assertions $f(x)=O(g(x))$ and $ f(x) \ll g(x)$
are both equivalent to the inequality $|f(x)|\le c g(x)$ with some absolute constant $c > 0$
for any sufficiently large $x$.

\section{Proofs of Theorems~\ref{thm} and \ref{thm2}.}

To prove Theorems~\ref{thm} and \ref{thm2} we need the following three lemmas.

\begin{lemma}[{\cite[Theorem 1]{Ivic}}]  \label{lem:Ivic}
For any $x > 1$, denote by $N(x)$ the number of
positive integers $n$ such that $n \le x$ and  $S(n) \neq P(n)$.
Then
$$
 N(x) = x \exp \left(-\sqrt{2\log x \log\log x} \big(1+ O(\log\log\log x /\log\log x) \big) \right).
$$
\end{lemma}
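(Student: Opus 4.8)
This is a theorem of Ivi\'{c} \cite{Ivic}, and here is how I would prove it. The starting point is the elementary fact that $S(n)=P(n)$ if and only if $n\mid P(n)!$, i.e.\ $v_q(n)\le v_q\big(P(n)!\big)$ for every prime $q$, where $v_q$ denotes the $q$-adic valuation. Since Legendre's formula gives $v_q\big(P(n)!\big)\ge\lfloor P(n)/q\rfloor\ge1$ for every prime $q\le P(n)$, the inequality $S(n)\ne P(n)$ forces a prime power $q^{a}\parallel n$ with $a\ge2$ and $a>v_q\big(P(n)!\big)$, hence $\lfloor P(n)/q\rfloor\le a-1$, i.e.\ $q>P(n)/a$. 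If moreover $q\le\sqrt{P(n)}$, then $a>\lfloor P(n)/q\rfloor\ge\sqrt{P(n)}-1$, so $2^{a}\le q^{a}\le n\le x$ forces $P(n)$ to be at most a fixed power of $\log x$; such $n$ are $(\log x)^{3}$-smooth and hence number at most $x^{3/4}$. Every remaining $n$ counted by $N(x)$ is divisible by $p^{2}$ with $p=P(n)$, or by $q^{2}$ for some prime $q\in(p/2,p)$ with in addition $p=P(n)\mid n$, or by $q^{a}$ for some $a\ge3$ and some prime $q\in(p/a,p)$.

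For the upper bound the plan is to write, with $\Psi(t,y)$ the number of $y$-smooth integers up to $t$ and all inner sums running over primes,
\[
N(x)\ \le\ x^{3/4}\ +\ \sum_{a\ge2}\ \sum_{p}\Big(\Psi(x/p^{a},p)+\sum_{p/a<q<p}\Psi\big(x/(q^{a}p),p\big)\Big),
\]
and to evaluate the right-hand side by the Laplace (saddle-point) method. The virtue of this shape is that it needs no smoothness cut-off parameter: each summand is automatically small both when $p$ is small (few smooth numbers) and when $p^{a}$ is close to $x$ (the prescribed divisor exhausts $n$). I would insert the standard estimate $\Psi(t,y)=t\rho(u)\,(1+o(1))$ (valid for $y$ not too small), where $u=\log t/\log y$ and $\rho$ is the Dickman function with $\rho(u)=\exp(-u\log u\,(1+o(1)))$, together with $\pi(t)\sim t/\log t$; writing $\ell=\log p$ and $L=\log x$, the $a=2$ contribution then becomes an integral of the form $x\int\ell^{-O(1)}e^{-\ell}\rho(L/\ell)\,d\ell$, whose exponent $-\ell-\tfrac{L}{\ell}\log\tfrac{L}{\ell}$ is maximised at $\ell\sim\tfrac{1}{\sqrt{2}}\sqrt{L\log L}$ with maximal value $-\sqrt{2L\log L}\,(1+o(1))$; both pieces of the $a=2$ term contribute at this order. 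The term with a fixed $a\ge3$ contributes only $x\exp\big(-\sqrt{2(a-1)L\log L}\,(1+o(1))\big)$, which is negligible, and the sum over $a$ converges, yielding $N(x)\le x\exp\big(-\sqrt{2\log x\log\log x}\,(1+o(1))\big)$.

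For the matching lower bound I would exhibit a family concentrated at the saddle point. Set $P:=\exp\!\big(\tfrac{1}{\sqrt{2}}\sqrt{\log x\log\log x}\,\big)$ and take $n=p^{2}m$ with $p$ a prime in $[P,2P]$ and $m$ a $P$-smooth integer, $m\le x/(4P^{2})$. Each such $n$ satisfies $n\le x$, $P(n)=p$, and $p^{2}\mid n$, hence $S(n)\ne P(n)$; distinct pairs $(p,m)$ give distinct $n$ since $p=P(n)$ and $m=n/p^{2}$. Their number is $\gg\tfrac{P}{\log P}\,\Psi\big(x/(4P^{2}),P\big)$, and the same estimates for $\Psi$ and $\pi$ show this equals $x\exp\big(-\sqrt{2\log x\log\log x}\,(1+o(1))\big)$, which is the announced order of magnitude --- the choice of $P$ being exactly the saddle point of the upper-bound computation.

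The main difficulty is quantitative rather than structural. Replacing the loose factor $(1+o(1))$ in the exponent by the stated $1+O(\log\log\log x/\log\log x)$ requires the sharp asymptotics for $\Psi(t,y)$ (equivalently, precise estimates for $\rho$ and $\rho'$) uniformly in the range $\log y\asymp\sqrt{\log x\log\log x}$, a uniform form of the prime number theorem, and a careful second-order saddle-point expansion; one must also verify that the contributions of the exponents $a\ge3$, of the extreme ranges of $p$, and of the sub-case $q\le\sqrt{P(n)}$ are genuinely of smaller order than the $a=2$ main term.
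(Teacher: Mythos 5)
The paper does not prove this statement: it is imported verbatim as Theorem 1 of Ivi\'c's paper \cite{Ivic}, so there is no internal proof to compare against. Judged on its own terms, your sketch is sound and in fact reconstructs the essential architecture of Ivi\'c's argument: the characterization $S(n)\neq P(n)\iff n\nmid P(n)!$, the deduction via Legendre's formula that some prime $q>P(n)/a$ must divide $n$ to a power $a\ge 2$ exceeding $v_q(P(n)!)$, the reduction of the main term to integers divisible by the square of a prime near $P(n)$, the saddle-point evaluation of $\sum_p \Psi(x/p^2,p)$ locating the critical scale at $\log p \sim \sqrt{\tfrac12\log x\log\log x}$ with exponent $-\sqrt{2\log x\log\log x}$, and a matching lower-bound family $n=p^2m$ with $m$ smooth. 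I verified the combinatorial reduction and the saddle-point arithmetic, and both are correct; the lower-bound family is genuinely injective since $p=P(n)$ and $m=n/p^2$ are recoverable from $n$.

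Two small points of care. First, your disposal of the sub-case $q\le\sqrt{P(n)}$ via ``such $n$ are $(\log x)^3$-smooth and hence number at most $x^{3/4}$'' invokes the estimate $\Psi(x,(\log x)^A)=x^{1-1/A+o(1)}$; this is true but stronger than needed --- even the cruder bound $\Psi(x,(\log x)^3)\ll x\exp(-\log x/(6\log\log x))$ already beats the main term, so nothing is at risk there. Second, the use of $\Psi(t,y)=t\rho(u)(1+o(1))$ at the scale $\log y\asymp\sqrt{\log x\log\log x}$ does lie inside Hildebrand's range $\log y\ge(\log\log t)^{5/3+\varepsilon}$, so that step is legitimate, but as you correctly acknowledge, extracting the stated relative error $O(\log\log\log x/\log\log x)$ in the exponent (rather than $o(1)$) requires the second-order asymptotics of $\rho$ and a uniform treatment of the exponents $a\ge3$ and of the tails in $p$. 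Your proposal is therefore an honest and structurally correct outline rather than a complete proof, which is an appropriate standard for a result the paper itself only cites.
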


\begin{lemma}[{\cite[Chapter I.0, Corollary 2.1]{Tenenbaum1995}}]  \label{lem:fac}
For any integer  $n \ge 1$, we have
$$
\log n!=n\log n - n + 1 +\theta\log n
$$
with $\theta=\theta_n\in [0,1]$.
\end{lemma}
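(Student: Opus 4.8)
The plan is to prove the lemma in the equivalent form
$$
n\log n - n + 1 \;\le\; \log n! \;\le\; n\log n - n + 1 + \log n,
$$
since once these two inequalities are in hand, the existence of $\theta=\theta_n\in[0,1]$ with $\log n! = n\log n - n + 1 + \theta\log n$ is immediate. The starting point is the elementary identity $\log n! = \sum_{k=1}^{n}\log k$ together with the exact antiderivative computation $\int_1^n \log t\,dt = \big[t\log t - t\big]_1^n = n\log n - n + 1$, so the whole problem reduces to sandwiching a sum between integrals of the monotone increasing function $\log t$.

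For the lower bound I would use that for each integer $k\ge 2$ and each $t\in[k-1,k]$ one has $\log t \le \log k$, hence $\int_{k-1}^{k}\log t\,dt \le \log k$. Summing over $k=2,\dots,n$ gives
$$
n\log n - n + 1 = \int_1^n \log t\,dt \;\le\; \sum_{k=2}^{n}\log k = \log n!,
$$
which is the left-hand inequality.

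For the upper bound, the naive estimate $\log k \le \int_k^{k+1}\log t\,dt$ summed over $k=2,\dots,n$ only yields $\log n! \le \int_2^{n+1}\log t\,dt$, which overshoots the target by a bit. So instead I would peel off the top term first, writing $\log n! = \log n + \log(n-1)!$, and then bound
$$
\log(n-1)! = \sum_{k=2}^{n-1}\log k \;\le\; \sum_{k=2}^{n-1}\int_k^{k+1}\log t\,dt = \int_2^{n}\log t\,dt \;\le\; \int_1^{n}\log t\,dt = n\log n - n + 1.
$$
Adding $\log n$ back in gives the right-hand inequality.

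Combining the two bounds yields the claimed formula. The only loose ends are the small cases: for $n=1$ the three quantities $\log 1!$, $1\cdot\log 1 - 1 + 1$, and $\log 1$ all vanish, so any $\theta$ works and the integral manipulations degenerate harmlessly; for $n=2$ the sums run over the single index $k=2$, already covered by the arguments above. I expect no genuine obstacle here; the one point that needs a moment's care is the decision to split off the factor $\log n$ rather than sandwich $\log n!$ directly, since the direct sandwich would only produce a weaker additive error term than the sharp $\log n$ appearing in the statement.
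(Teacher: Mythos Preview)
Your argument is correct. The paper does not give its own proof of this lemma; it simply cites it from Tenenbaum's textbook, and your integral--comparison proof is precisely the standard derivation found there (compare the sum $\sum_{k\le n}\log k$ with $\int_1^n\log t\,dt$ using monotonicity of $\log$, peeling off the top term for the sharp upper bound).
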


\begin{lemma}[{\cite[Chapter III.5, Theorem 1]{Tenenbaum1995}}] \label{lem:main}
For any $2\leq y \leq x$, denote by $\Psi(x,y)$ the number of positive integers $n$ such that $n\leq x$ and $P(n) \leq y$.
Then
$$
\Psi(x,y)\ll x\exp\left(-\frac{\log x}{2\log y}\right).
$$
\end{lemma}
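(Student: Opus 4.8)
The plan is to prove the bound by Rankin's method (the ``Rankin trick''), which is the standard route to upper bounds for counts of smooth numbers and needs only elementary prime‑counting input. Writing $P(n)$ for the largest prime factor of $n$, the starting point is that for any $\sigma\in(0,1)$ and any $n\le x$ one has $1\le(x/n)^\sigma$, whence
$$
\Psi(x,y)=\sum_{\substack{n\le x\\ P(n)\le y}}1\le x^\sigma\sum_{\substack{n\ge 1\\ P(n)\le y}}n^{-\sigma}=x^\sigma\prod_{p\le y}\left(1-p^{-\sigma}\right)^{-1},
$$
the Euler product being a finite product of convergent geometric series. Taking logarithms reduces the problem to choosing $\sigma$ so as to balance the saving $x^{\sigma-1}=\exp\!\big(-(1-\sigma)\log x\big)$ against the size of the Euler product.

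Next I would estimate that product. From $-\log(1-t)=t+O(t^2)$, valid uniformly for $0\le t\le 2^{-3/4}$, one gets $\log\prod_{p\le y}(1-p^{-\sigma})^{-1}=\sum_{p\le y}p^{-\sigma}+O(1)$ as long as $\sigma$ stays in a fixed subinterval of $(1/2,1)$, since then the tail $\sum_p p^{-2\sigma}$ converges. The main sum is controlled by partial summation together with the Chebyshev bound $\pi(t)\ll t/\log t$ (splitting the resulting integral at $t=\sqrt y$ to recover the logarithm in the denominator), yielding
$$
\sum_{p\le y}p^{-\sigma}\ll\frac{y^{1-\sigma}}{(1-\sigma)\log y}.
$$

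With these estimates in hand I would set $u=\log x/\log y$ and, in the principal range, choose $\sigma=1-\frac{\log u}{\log y}$, i.e. $y^{1-\sigma}=u$. Then $x^{\sigma-1}=\exp(-u\log u)$ while the Euler product contributes $\exp\!\big(O(u/\log u)\big)$, so that
$$
\Psi(x,y)\le x\exp\!\left(-u\log u+O\!\left(\frac{u}{\log u}\right)\right)\le x\exp\!\left(-\frac{u}{2}\right)
$$
once $u$ exceeds an absolute constant, which is exactly the claimed bound because $u\log u\ge u/2$ for $u\ge 2$. The remaining ranges are disposed of separately: when $u$ is bounded the inequality is trivial from $\Psi(x,y)\le x$, the bounded factor $\exp(u/2)$ being absorbed into the implied constant; and when $y$ is so small that the above $\sigma$ falls outside the interval where the estimates are valid, one takes a correspondingly smaller $\sigma$ (or counts exponent vectors directly), where in any case $\Psi(x,y)$ lies far below the target.

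The main obstacle is precisely this uniformity over the whole range $2\le y\le x$. A single naive choice such as $\sigma=1-1/\log y$ loses a factor $(\log y)^{O(1)}$ from the Euler product and fails for moderate $u$, so $\sigma$ must be tuned to $y$ (and implicitly to $x$), and the Chebyshev‑type estimate for $\sum_{p\le y}p^{-\sigma}$ must be made uniform right up to the edge $\sigma\to 1$. Optimizing $\sigma$ exactly leads to the saddle‑point equation $\log x=\sum_{p\le y}\frac{\log p}{p^{\sigma}-1}$ and to the sharp Dickman--de Bruijn estimate; but for the weaker exponent $1/2$ demanded here, the explicit sub‑optimal choice above suffices.
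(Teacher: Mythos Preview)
The paper does not prove this lemma at all; it is simply quoted from Tenenbaum's book, where the proof is precisely Rankin's method as you outline. Your sketch is therefore the standard argument behind the cited result, and the overall strategy---bound $\Psi(x,y)\le x^{\sigma}\prod_{p\le y}(1-p^{-\sigma})^{-1}$, control $\sum_{p\le y}p^{-\sigma}$ via Chebyshev and partial summation, and optimise $\sigma$ in terms of $u=\log x/\log y$---is correct.

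One small caution on the details you flag yourself: with the choice $\sigma=1-\tfrac{\log u}{\log y}$, the claimed estimate $\sum_{p\le y}p^{-\sigma}\ll\frac{y^{1-\sigma}}{(1-\sigma)\log y}$ requires that the contribution of $\int_2^{\sqrt y}t^{-\sigma}(\log t)^{-1}\,dt$, which is of order $\frac{y^{(1-\sigma)/2}}{1-\sigma}=\frac{\sqrt{u}\,\log y}{\log u}$, be dominated by $\frac{u}{\log u}$; this only holds when $\log y\ll\sqrt{u}$, i.e.\ roughly $(\log y)^3\ll\log x$. In the complementary range $u\le(\log y)^2$ one instead takes $\sigma=1-\tfrac{1}{2\log y}$, so that $x^{\sigma-1}=e^{-u/2}$ exactly and $\sum_{p\le y}p^{-\sigma}\le e^{1/2}\sum_{p\le y}p^{-1}\ll\log\log y$, giving $\Psi(x,y)\ll x(\log y)^{O(1)}e^{-u/2}$; since here $e^{-u/2}\ge e^{-(\log y)^2/2}$ while one is only aiming for the crude exponent $u/2$, a tiny shift of the constant (or simply invoking $\Psi(x,y)\le x$ when $u$ is bounded) absorbs the polylog. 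This is exactly the sort of range‑splitting Tenenbaum performs, and your final paragraph already anticipates it.
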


We are now ready to prove Theorems~\ref{thm} and \ref{thm2}.

\begin{proof}[Proof of Theorem~\ref{thm}]
We first separate the integers $n$ counted in $N_k(x)$  into two cases
depending on whether $S(n)\neq P(n)$ or $S(n)= P(n)$.
So, we define
\begin{align*}
& N_{k,1}(x)=|\{n\leq x: \, S(n)!\leq n^k, S(n) \neq P(n)\}|, \\
& N_{k,2}(x)=|\{n\leq x: \, S(n)!\leq n^k, S(n)= P(n)\}|.
\end{align*}
Then
\begin{equation}  \label{eq:Nk}
N_k(x)=N_{k,1}(x)+N_{k,2}(x).
\end{equation}

Using Lemma~\ref{lem:Ivic},
we obtain
\begin{equation} \label{eq:Nk1}
\begin{split}
N_{k,1}(x) & \leq N(x) \\
& = x \exp \left(-\sqrt{2\log x \log\log x} \big(1+ O(\log\log\log x /\log\log x) \big) \right).
\end{split}
\end{equation}

We next estimate $N_{k,2}(x)$.
The integers $n$ counted in $N_{k,2}(x)$ can be divided into
the following two cases:
\begin{enumerate}
\item[(i)] $ S(n)!\leq n^k$ and $S(n)= P(n) \leq 5 $;

\item[(ii)] $ S(n)!\leq n^k$ and $S(n)= P(n)\geq 7$.
\end{enumerate}

In case (i) there are at most $12$ possibilities for $n$ by considering $S(n)= P(n)\leq 5 $
(that is, $1, 2, 3, 5, 6, 10, 15, 20, 30, 40, 60, 120$).

For any integer $n$ in case (ii), using Lemma~\ref{lem:fac} we have
\begin{align*}
e \left(\frac{P(n)}{e} \right)^{P(n)}\leq P(n)! = S(n)! \le n^k \le x^k,
\end{align*}
which, together with $P(n)\geq 7$, gives
\begin{equation} \label{eq:Pn}
P(n)\leq 1 + P(n) \log \frac{P(n)}{e} \le   k\log x.
\end{equation}
 So, we obtain
 $$
 N_{k,2}(x) \leq 12 + \Psi(x,k\log x).
 $$
 By Lemma \ref{lem:main},
$$
\Psi(x,k\log x)\ll x \exp\left(-\frac{\log x}{2(\log k+\log\log x) }\right)
$$
when $2\leq k\log x\leq x$.
Thus, for any sufficiently large $x$ we get
\begin{equation}  \label{eq:Nk2}
N_{k,2}(x) \ll x \exp\left(-\frac{\log x}{2(\log k+\log\log x) }\right).
\end{equation}
Finally, combining \eqref{eq:Nk} with \eqref{eq:Nk1} and \eqref{eq:Nk2}, we have
$$
N_k(x) \le x \exp \left(-\sqrt{2\log x \log\log x} \big(1+ O(\log\log\log x /\log\log x) \big) \right)
$$
for any fixed $k > 1$ and any sufficiently large $x$.
This completes the proof.
 \end{proof}

 \begin{proof}[Proof of Theorem~\ref{thm2}]
We use the same approach as in proving Theorem~\ref{thm}.
First, we have
\begin{equation}  \label{eq:M}
M(x) = M_1(x) + M_2(x),
\end{equation}
where
\begin{align*}
& M_1(x)=|\{n\leq x: \, S(n)! \leq \exp(n^{1/ \log\log n}), S(n) \neq P(n)\}|, \\
& M_2(x)=|\{n\leq x: \, S(n)! \leq \exp(n^{1/ \log\log n}), S(n)= P(n)\}|.
\end{align*}
As before, we obtain
\begin{equation}  \label{eq:M1}
\begin{split}
M_1(x) & \leq N(x) \\
& = x \exp \left(-\sqrt{2\log x \log\log x} \big(1+ O(\log\log\log x /\log\log x) \big) \right).
\end{split}
\end{equation}

As in the derivation of \eqref{eq:Pn}, for any integer $n$ counted in $M_2(x)$ satisfying $P(n) \ge 7$, we obtain
$$
P(n) \le x^{1/\log\log x}.
$$
So, using Lemma~\ref{lem:main}, for any sufficiently large $x$ we have
\begin{equation}   \label{eq:M2}
M_2(x)  \le 12 + \Psi(x, x^{1/\log\log x})  \ll x/ \sqrt{\log x}.
\end{equation}
Finally, combining \eqref{eq:M} with \eqref{eq:M1} and \eqref{eq:M2}, we obtain
$$
M(x) \ll x/ \sqrt{\log x}.
$$
This completes the proof.
 \end{proof}

\section*{acknowledgment}
The authors would like to thank the editor and the referees for their valuable comments.
The first author was supported by the Scientific Research Foundation of Qufu Normal University No. BSQD20130139,
and the second author was supported by the Australian Research Council Grant DE190100888.


\end{document}